\title{Type-definable NIP fields are Artin-Schreier closed}
\author{Will Johnson}
\DeclareMathOperator*{\forkindep}{\raise0.2ex\hbox{\ooalign{\hidewidth$\vert$\hidewidth\cr\raise-0.9ex\hbox{$\smile$}}}}
\newcommand{\ACF}{\operatorname{ACF}}
\newcommand{\characteristic}{\operatorname{char}}
\newcommand{\Aut}{\operatorname{Aut}}
\newcommand{\dcl}{\operatorname{dcl}}
\newtheorem{theorem}{Theorem}[section] 
\newtheorem{lemma}[theorem]{Lemma}
\newtheorem{corollary}[theorem]{Corollary}
\newtheorem{fact}[theorem]{Fact}
\newtheorem*{theorem-star}{Theorem}
\newtheorem*{theorem-1}{Main Theorem}
\newtheorem*{lemma-star}{Theorem}
\newtheorem*{conjecture-star}{Conjecture}
\theoremstyle{definition}
\newtheorem{definition}[theorem]{Definition}
\newtheorem{example}[theorem]{Example}
\newtheorem{remark}[theorem]{Remark}
\newtheorem*{acknowledgment}{Acknowledgments}
\newcommand{\Aa}{\mathbb{A}}
\newcommand{\Zz}{\mathbb{Z}}
\newcommand{\Gg}{\mathbb{G}}
\newcommand{\Mm}{\mathbb{M}}
\newcommand{\Ff}{\mathbb{F}}
\begin{document}
\maketitle\unmarkedfntext{
  \emph{2020 Mathematical Subject Classification}: 03C45, 03C60.

  \emph{Key words and phrases}: NIP fields, type-definable fields.
  }

\begin{abstract}
  Let $K$ be a type-definable infinite field in an NIP theory.  If $K$
  has characteristic $p > 0$, then $K$ is Artin-Schreier closed (it
  has no Artin-Schreier extensions).  As a consequence, $p$ does not
  divide the degree of any finite separable extension of $K$.  This
  generalizes a theorem of Kaplan, Scanlon, and Wagner.
\end{abstract}

\section{Introduction}

An \emph{Artin-Schreier extension} is a non-trivial field extension of
the form $K(\alpha)/K$ where $\characteristic(K) = p$ and $\alpha^p -
\alpha \in K$.  If $K$ has characteristic $p$, then the Artin-Schreier
extensions are precisely the cyclic extensions of degree $p$.  We say
that a field $K$ of characteristic $p$ is \emph{Artin-Schreier closed}
if it has no Artin-Schreier extensions, or equivalently, the
Artin-Schreier map $\wp(x) = x^p - x$ is surjective.

Let $K$ be an infinite NIP field of characteristic $p$.  Then $K$ is
Artin-Schreier closed by a theorem of Kaplan, Scanlon, and Wagner
\cite[Theorem 4.4]{NIPfields}.  Here, we generalize this result to
type-definable NIP fields, answering a question asked in
\cite[Section~4]{NIPfields} and further studied in \cite[Question~1.14]{kaplan-shelah}.
\begin{theorem-1}
  Let $K$ be a type-definable field in an appropriately-saturated NIP
  structure.  If $K$ is infinite and has positive characteristic, then
  $K$ is Artin-Schreier closed.
\end{theorem-1}
The method of \cite[Corollary~4.5]{NIPfields} then gives the
following.
\begin{corollary}
  Let $K$ be a type-definable infinite field in an NIP structure.
  Then $p$ does not divide the degree of any finite separable
  extension of $K$.
\end{corollary}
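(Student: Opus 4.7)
The plan is to run the standard reduction from \cite[Corollary~4.5]{NIPfields}, substituting the Main Theorem above for the Kaplan-Scanlon-Wagner result used there. Suppose, for contradiction, that $L/K$ is a finite separable extension with $p \mid [L:K]$, where $p = \characteristic(K) > 0$. Replacing $L$ by its Galois closure over $K$ (still finite, separable, with degree divisible by $p$), one may assume $L/K$ is Galois. By Cauchy's theorem applied to the finite group $G = \Gal(L/K)$, there is a subgroup $H \leq G$ of order $p$. Let $F = L^H$. Then $L/F$ is Galois of degree $p$, hence cyclic of degree $p$; since $\characteristic(F) = p$, this is an Artin-Schreier extension of $F$.

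To apply the Main Theorem to $F$ and derive a contradiction, the remaining task is to check that $F$ is itself type-definable in the same NIP structure. Since $F/K$ is finite separable, it admits a primitive element $\beta$ with minimal polynomial $g(x) \in K[x]$; using the $K$-basis $1, \beta, \ldots, \beta^{[F:K]-1}$, one identifies $F$ with $K^{[F:K]}$ as a set, while the field operations become polynomial functions of the coordinates with parameters given by the coefficients of $g$. Hence $F$ is an infinite type-definable field of characteristic $p$ in the ambient NIP structure. By the Main Theorem, $F$ is Artin-Schreier closed, contradicting the existence of the Artin-Schreier extension $L/F$. The main (in fact only) obstacle is this routine verification of the type-definability of $F$; the rest is standard Galois theory.
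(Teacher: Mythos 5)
Your proposal is correct and is precisely the method of \cite[Corollary~4.5]{NIPfields} that the paper invokes without writing out: pass to the Galois closure, use Cauchy's theorem to extract a subgroup of order $p$ whose fixed field $F$ admits a cyclic degree-$p$ (hence Artin-Schreier) extension, note that $F$ is type-definable by identifying it with $K^{[F:K]}$ via a basis (exactly the interpretation described at the start of Section~\ref{ks-apply-sec}), and contradict the Main Theorem. No gaps; this matches the paper's intended argument.
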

\subsection{Outline}
We give two proofs of the main theorem.  Both proofs build off the
following theorem of Kaplan and Shelah, which is a variant of the
Baldwin-Saxl lemma:
\begin{fact}[{\cite[Corollary~3.3]{kaplan-shelah}}] \label{KS-fact}
  Let $\Mm$ be a highly saturated NIP structure.  Let $(K,\cdot)$ be a
  type-definable group, type-definable over $A \subseteq \Mm$.  Let
  $(G_i : i \in \Zz)$ be an $A$-indiscernible sequence of
  type-definable subgroups.  Then $G_0 \supseteq \bigcap_{i \ne 0}
  G_i$.
\end{fact}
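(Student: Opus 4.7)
The plan is to argue by contradiction: suppose $G_0 \not\supseteq \bigcap_{i \ne 0} G_i$, fix a witness $a \in \bigcap_{i \ne 0} G_i \setminus G_0$, transport $a$ via shift-indiscernibility of $\Zz$ to obtain analogous witnesses $a_j$ at every index $j \in \Zz$, and then form products of the $a_j$ to realize arbitrary finite membership patterns in the $G_i$. A single formula will inherit the independence property, contradicting NIP of $\Mm$. The construction is in the spirit of Baldwin--Saxl, but two features need attention: the $G_i$ are only type-definable, so no single formula controls membership in $G_i$; and the family is presented by indiscernibility rather than a uniform parameterization.

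To address type-definability, I would first prove a ``group-chunk'' reduction at $G_0$. Write $b_0$ for the parameter defining $G_0$. Since $a \notin G_0$, choose a formula $\psi'(x,y)$ in the partial type defining $G_0$ with $a \notin L_0' := \psi'(\Mm, b_0)$. Using that $G_0 \cdot G_0 \cdot G_0 = G_0 \subseteq L_0'$, saturation plus a standard compactness argument produces a formula $\psi(x,y)$ in the same partial type such that $L_0 := \psi(\Mm, b_0)$ satisfies $L_0 \cdot L_0 \cdot L_0 \subseteq L_0'$. By $A$-indiscernibility, the same $\psi$ and $\psi'$ work uniformly at every index: setting $L_i := \psi(\Mm, b_i)$ and $L_i' := \psi'(\Mm, b_i)$, one has $G_i \subseteq L_i$ and $L_i \cdot L_i \cdot L_i \subseteq L_i'$ for all $i$. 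Shift-indiscernibility of $\Zz$ then says that the partial type ``$x \in G_i$ for all $i \ne j$ and $\neg \psi'(x, b_j)$'' is consistent for every $j$ (the $j = 0$ instance is witnessed by $a$, and consistency is preserved by translating $\Zz$), so saturation produces a realization $a_j$ for each $j \in \Zz$.

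For a finite $S = \{j_1 < \cdots < j_m\} \subset \Zz$, set $b_S := a_{j_1} \cdots a_{j_m}$. If $k \notin S$, each factor $a_{j_\ell}$ lies in $G_k$, so $b_S \in G_k \subseteq L_k$. If $k = j_\ell \in S$, write $b_S = c_1 \cdot a_{j_\ell} \cdot c_2$ where $c_1$ and $c_2$ are products of the remaining $a_{j_r}$; these lie in $G_k$ since $j_r \ne k$ for $r \ne \ell$. Assuming $b_S \in L_k$ would force $a_{j_\ell} = c_1^{-1} b_S c_2^{-1} \in G_k \cdot L_k \cdot G_k \subseteq L_k \cdot L_k \cdot L_k \subseteq L_k'$, contradicting $a_{j_\ell} \notin L_k'$. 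Hence $b_S \in L_k$ if and only if $k \notin S$, which is IP for the single formula $\psi(x,y)$ against the indiscernible sequence $(b_k)_{k \in \Zz}$---contradicting NIP. The main obstacle is the group-chunk reduction: translating ``$a$ avoids the type-definable set $G_0$'' into a single formula $\psi$ whose triple product is swallowed by a single formula $\psi'$ avoided by $a$. This is the only step that genuinely exploits the type-definability (rather than outright definability) of the $G_i$, and it is exactly what makes the subsequent product argument produce honest IP rather than a mere type-definable pattern.
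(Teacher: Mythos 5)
The paper does not prove this statement---it is imported verbatim from Kaplan--Shelah as a citation---so there is no internal proof to compare against; your blind argument is the standard Baldwin--Saxl product trick adapted to type-definable subgroups, which is precisely the strategy the paper attributes to the cited result, and it is essentially correct. The only point needing care is the group-chunk step: since the ambient group $K$ is merely type-definable, the triple product $L_0 \cdot L_0 \cdot L_0$ of sets that may leak outside $K$ is not literally meaningful (multiplication is only relatively definable on $K$). But you only ever multiply elements of $K$ (namely $c_1^{-1}$, $b_S$, $c_2^{-1}$, with $c_1^{-1}, c_2^{-1} \in G_k$), so the compactness argument should instead be run---including the partial type defining $K$ among the hypotheses---to produce $\psi$ in the defining type with $G_0 \cdot \bigl(\psi(\Mm,b_0) \cap K\bigr) \cdot G_0 \subseteq \psi'(\Mm,b_0)$, which is exactly what the final step uses and is obtained by the same inconsistency-of-types argument. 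With that adjustment (and the harmless normalization that the defining partial type is closed under conjunction), the indiscernibility transport of the witness, the membership pattern $\psi(b_S,b_k) \iff k \notin S$, and the resulting contradiction with NIP all go through as you describe.
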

In Section~\ref{ks-apply-sec} we use this to prove
Lemma~\ref{ks-apply} about counterexamples to the main theorem.  Then
the two proofs diverge.  In Section~\ref{alg-groups-version}, we give
a short proof of the main theorem relying on some machinery from algebraic groups, closely following Kaplan, Scanlon, and Wagner's original proof \cite{NIPfields}.  In
Section~\ref{elementary-version}, we give a more elementary and
explicit proof using additive polynomials.  The relation between the
two proofs is discussed in Section~\ref{comparison}, which also
provides conceptual motivation for the explicit proof of
Section~\ref{elementary-version}.

\begin{remark}
  Alternatively, the proof in Section~\ref{alg-groups-version} can be
  made explicit using calculations by Bays \cite[Appendix~A]{n-dep-2}.
\end{remark}

\subsection{Notation and conventions}
We use $ab$ to denote concatenation of tuples, and $a \cdot b$ to
denote multiplication.  We reserve the symbol $\Mm$ for monster
models, i.e., structures that are $\kappa$-saturated and strongly
$\kappa$-homogeneous for some cardinal $\kappa$ bigger than any
cardinals we care about.

Unless specified otherwise, $p$ is a prime and
all fields have characteristic $p$.  The prime field of size $p$
is written $\Ff_p$.  If $K$ is a field then $K^{alg}$ and $K^{sep}$
denote the algebraic and separable closures.  The perfect hull
$\bigcap_n K^{p^n}$ is denoted $K^{p^\infty}$.  The Artin-Schreier
polynomial $x^p - x$ is written $\wp(x)$.  If $V$ is a $K$-variety
(usually an algebraic group), and $L/K$ is an extension, then $V(L)$
denotes the set of $L$-points of $V$.  We denote the additive
algebraic group by $\Aa$ rather than the usual $\Gg_a$, to avoid
confusion with the groups $G_{\bar{a}}$ of Definition~\ref{ga-def}.
Then $\Aa(L) = (L,+)$.

\section{Application of NIP} \label{ks-apply-sec}
If $K$ is a type-definable field in some monster model $\Mm$, and $L$
is a finite extension, then we can interpret $L$ as a type-definable
field in $\Mm$, by identifying it with $K^d$ for $d = [L : K]$.  Under
this interpretation, $K$ is a relatively definable subset of $L$,
meaning that both $K$ and $L \setminus K$ are type-definable subsets
of $L$.
\begin{lemma}\label{ks-apply}
  Let $K$ be a type-definable infinite field of positive
  characteristic in an NIP structure $\Mm$.  Let $L/K$ be a
  non-trivial Artin-Schreier extension.  Let $B \subseteq \Mm$ be a
  small set of parameters over which $K$ and $L$ are type-definable.
  Let $(c_i : i \in \Zz)$ be a $B$-indiscernible sequence in
  $K^\times$.  Then there is $N$ such that
  \begin{equation}
    (c_0 \cdot \wp(L \setminus K)) \cap \bigcap_{i = 1}^N (c_i \cdot
    \wp(K)) \cap \bigcap_{i = -N}^{-1} (c_i \cdot \wp(K)) =
    \varnothing. \label{n-target}
  \end{equation}
\end{lemma}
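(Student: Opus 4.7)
The plan is to apply Fact~\ref{KS-fact} to the sequence of subgroups $G_i := c_i \cdot \wp(K)$ of the type-definable additive group $(L,+)$. Each $G_i$ is a type-definable subgroup over $Bc_i$, since $\wp(K)$ is a type-definable subgroup of $(K,+)$ and multiplication by $c_i \in K^\times$ is an additive automorphism of $L$. Because the groups are uniformly defined from the $B$-indiscernible sequence $(c_i)_{i \in \Zz}$, the sequence $(G_i)_{i \in \Zz}$ is $B$-indiscernible. Fact~\ref{KS-fact}, applied to the type-definable group $(L,+)$, will then yield
\[
  G_0 \; \supseteq \; \bigcap_{i \ne 0} G_i \; = \; \bigcap_{i \ne 0} c_i \cdot \wp(K).
\]

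I would then argue by contradiction. Suppose no $N$ satisfies \eqref{n-target}. Then for every finite $F \subseteq \Zz \setminus \{0\}$ there exist $\alpha \in L \setminus K$ and $\beta_i \in K$ ($i \in F$) with $c_0 \cdot \wp(\alpha) = c_i \cdot \wp(\beta_i)$. All conditions here are type-definable in the saturated monster $\Mm$, so compactness produces a single $\alpha \in L \setminus K$ and elements $(\beta_i)_{i \ne 0}$ in $K$ with $c_0 \cdot \wp(\alpha) = c_i \cdot \wp(\beta_i)$ for every $i \ne 0$. Set $x := c_0 \cdot \wp(\alpha)$. Then $x \in c_0 \cdot \wp(L \setminus K)$ and $x \in G_i$ for every $i \ne 0$, so the containment above forces $x \in G_0 = c_0 \cdot \wp(K)$. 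Writing $x = c_0 \cdot \wp(\beta)$ with $\beta \in K$ and cancelling the nonzero $c_0$ gives $\wp(\alpha) = \wp(\beta)$, hence $\alpha - \beta \in \ker \wp = \Ff_p \subseteq K$, hence $\alpha \in K$, contradicting $\alpha \in L \setminus K$. The argument is essentially a packaging of Fact~\ref{KS-fact} with the observation that $K^\times$ acts on $(L,+)$ by additive automorphisms permuting the Artin--Schreier subgroups, and no serious obstacle arises; the only point requiring a bit of care is the compactness step, which goes through because $K$, $L$, $L \setminus K$, and the map $\wp$ are all (type-)definable in $\Mm$.
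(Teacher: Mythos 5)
Your proof is correct and takes essentially the same route as the paper: apply the Kaplan--Shelah variant of Baldwin--Saxl to the $B$-indiscernible sequence of subgroups $c_i \cdot \wp(K)$, use $\ker \wp = \Ff_p$ to see that $\wp(K)$ and $\wp(L \setminus K)$ are disjoint, and conclude by compactness. The only cosmetic differences are that the paper works inside $(K,+)$ (intersecting everything with $K$ first) rather than $(L,+)$, and runs the compactness step in the forward direction (shrinking an empty infinite intersection to a finite subintersection) rather than by contradiction.
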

\begin{proof}
  Because $K$ and $L \setminus K$ are type-definable, so are the two
  sets
  \begin{align*}
  A &:= \wp(K) \cap K = \wp(K) \\
  U &:= \wp(L \setminus K) \cap K
  \end{align*}
  The sets $A$ and $U$ are disjoint---if $u \in K$ and $v \in L
  \setminus K$ and $\wp(u) = \wp(v)$, then $u-v \in \Ff_p$, so $v \in
  K + \Ff_p = K$, a contradiction.  The set $A$ is a type-definable
  subgroup of $K$, so we can apply Fact~\ref{KS-fact} to the
  sequence $(c_i \cdot A : i \in \Zz)$, concluding that $c_0 \cdot A
  \supseteq \bigcap_{i \ne 0} (c_i \cdot A)$.  Because $A$ and $U$ are
  disjoint, $(c_0 \cdot U) \cap \bigcap_{i \ne 0} (c_i \cdot A) =
  \varnothing$.  By compactness, there is a finite $N > 0$ such that
  \begin{equation*}
    (c_0 \cdot U) \cap \bigcap_{i = 1}^N (c_i \cdot A) \cap \bigcap_{i =
      -N}^{-1} (c_i \cdot A) = \varnothing.
  \end{equation*}
  This implies Equation (\ref{n-target}).
\end{proof}
In the following sections, our strategy will be to build an
indiscernible sequence $(c_i : i \in \Zz)$ for which Equation
(\ref{n-target}) fails.

\section{Proof via algebraic groups} \label{alg-groups-version}
Recall that $\Aa$ denotes the additive group, as an algebraic group.
The following definition is from \cite{NIPfields}:
\begin{definition} \label{ga-def}
  If $K$ is a field of characteristic $p$ and $\bar{a} \in K^n$, then
  $G_{\bar{a}}$ is the algebraic subgroup of $\Aa^n$ defined by
  \begin{equation*}
    G_{\bar{a}}(L) = \{(t,x_1,\ldots,x_n) \in L^{n+1} : t = a_1 \cdot
    \wp(x_1) = a_2 \cdot \wp(x_2) = \cdots = a_n \cdot \wp(x_n)\},
  \end{equation*}
  for any $L/K$.
\end{definition}
\begin{fact}[{\cite[Corollary~2.9]{NIPfields}}] \label{they-are-additive-group}
  If $K$ is perfect and $\bar{a} \in K^n$ is algebraically independent
  over $\Ff_p$, then $G_{\bar{a}}$ is isomorphic to $\Aa$ as an
  algebraic group over $K$.
\end{fact}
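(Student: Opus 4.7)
My plan is to show that $G_{\bar{a}}$ is a smooth, geometrically connected, one-dimensional commutative algebraic subgroup of $\Aa^{n+1}$ over $K$, and then invoke the classification of one-dimensional commutative unipotent algebraic groups over a perfect field (which asserts that any such group is isomorphic to $\Aa$).

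Smoothness and dimension one follow from a Jacobian computation. Rewriting the defining equations as the $n-1$ conditions $a_1 \wp(x_1) = a_i \wp(x_i)$ for $i = 2, \ldots, n$ (with $t$ determined by $t = a_1 \wp(x_1)$), and using that the derivative of $\wp$ is $-1$ in characteristic $p$, the Jacobian matrix of the corresponding map $\Aa^n \to \Aa^{n-1}$ has its last $n-1$ columns equal to $\diag(a_2, \ldots, a_n)$. Since each $a_i \neq 0$ (by algebraic independence), the Jacobian has full rank $n-1$ at every point, so $G_{\bar{a}}$ is smooth of pure dimension one. Being a closed subgroup of $\Aa^{n+1}$, it is automatically commutative, unipotent, and killed by $p$.

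The main obstacle is geometric connectedness, equivalently the vanishing of the component group $\pi_0(G_{\bar{a}})$. The projection $\pi: G_{\bar{a}} \to \Aa$ onto $t$ is an \'etale isogeny of degree $p^n$ with kernel $\Ff_p^n$, and a short argument shows $\pi_0(G_{\bar{a}})$ is a quotient of this $\Ff_p^n$; so \emph{a priori} $G_{\bar{a}}$ could split into up to $p^n$ geometric components. (Indeed, when $\bar{a}$ is algebraically dependent, e.g.\ $a_1 = a_2$, the map $(t, \bar{x}) \mapsto x_1 - x_2$ is a nontrivial regular homomorphism $G_{\bar{a}} \to \Ff_p$, witnessing disconnectedness.) I would rule out this possibility by classifying regular homomorphisms $\chi : G_{\bar{a}} \to \Ff_p$: every such $\chi$ factors through $\pi_0(G_{\bar{a}})$, and conversely, if $\pi_0(G_{\bar{a}}) \ne 0$ there exists a nontrivial such $\chi$ (since $\pi_0(G_{\bar{a}})$ is a nontrivial $p$-group). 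For the simplest class of characters, $\chi = \sum_{i=1}^n \epsilon_i x_i$ with $\epsilon_i \in \Ff_p$, one computes $\wp(\chi) = \sum \epsilon_i \wp(x_i) \equiv t \sum \epsilon_i/a_i$ modulo the defining ideal (using $\wp(x_i) = t/a_i$), so the condition $\wp(\chi) \equiv 0$ reduces to $\sum \epsilon_i/a_i = 0$ in $K$; algebraic independence of $\bar{a}$ over $\Ff_p$ yields $\Ff_p$-linear independence of $\{1/a_i\}$, forcing all $\epsilon_i = 0$. The main technical work is to reduce general characters (involving Frobenius twists and $K$-coefficients in the additive-polynomial description) to this elementary linear case, using perfectness of $K$ together with the relations $a_i \wp(x_i) = t$ to absorb Frobenius twists.

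With geometric connectedness established, $G_{\bar{a}}$ is a smooth, connected, one-dimensional, commutative, unipotent algebraic group over the perfect field $K$, so the classification gives $G_{\bar{a}} \cong \Aa$ over $K$. I expect the principal difficulty to be the reduction of general additive-polynomial characters to the elementary linear case; the final step once that is done, passing from $\sum \epsilon_i/a_i = 0$ to $\epsilon_i = 0$ via algebraic independence of $\bar{a}$, is clean.
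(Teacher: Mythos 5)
This statement appears in the paper only as an imported Fact (Corollary~2.9 of Kaplan--Scanlon--Wagner), so there is no in-paper proof to compare against; I am therefore comparing your proposal with the argument in the cited source. Your strategy is essentially the same as theirs: show $G_{\bar{a}}$ is smooth, one-dimensional, unipotent, and geometrically connected, then invoke the triviality of one-dimensional forms of $\Aa$ over a perfect field. Your Jacobian computation is correct, perfectness is invoked in the right place (only at the final classification step), and your identification of the crux is exactly right: a linear character $\sum_i \epsilon_i x_i$ kills $G_{\bar{a}}$ only if $\sum_i \epsilon_i/a_i = 0$, and algebraic independence of $\bar{a}$ over $\Ff_p$ forces $\epsilon = 0$. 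Indeed, the remark in the paper immediately after the Fact (Hempel's refinement) confirms that $\Ff_p$-linear independence of $(1/a_1,\ldots,1/a_n)$ is the true hypothesis --- precisely the condition your computation isolates.

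The step you flag as open --- reducing arbitrary homomorphisms $G_{\bar{a}} \to \Ff_p$ to the linear characters --- is a genuine gap as written, and classifying such homomorphisms directly (additive polynomials with Frobenius twists and $K$-coefficients, restricted to $G_{\bar{a}}$) is the awkward route. The standard way to close it avoids characters of $G_{\bar{a}}$ entirely: base-change the finite \'etale cover $\pi : G_{\bar{a}} \to \Aa$ (projection to $t$) to the generic point of the $t$-line. The generic fiber is the tensor product over $K^{alg}(t)$ of the Artin--Schreier algebras $K^{alg}(t)[x_i]/(\wp(x_i) - t/a_i)$, and $G_{\bar{a}}$ is geometrically connected iff this tensor product is a field, which by Artin--Schreier theory holds iff the classes of $t/a_1, \ldots, t/a_n$ are $\Ff_p$-linearly independent in $K^{alg}(t)/\wp(K^{alg}(t))$. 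A nontrivial relation would mean $\wp(g) = t\sum_i \epsilon_i/a_i$ for some $g \in K^{alg}(t)$; such a $g$ can have no poles, and $\deg \wp(g) = p\deg g$ for nonconstant polynomial $g$, so $g$ is constant and $\sum_i \epsilon_i/a_i = 0$, landing exactly at your linear computation. With that substitution for your ``reduction of general characters,'' the outline becomes a complete proof along the lines of the cited one.
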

In fact, Hempel shows that the conclusion holds as long as
$(1/a_1,\ldots,1/a_n)$ is $\Ff_p$-linearly independent
\cite[Corollary~5.4]{n-dep-1}.  An explicit isomorphism between
$G_{\bar{a}}$ and $\Aa$ is given by Bays \cite[Appendix~A]{n-dep-2}.
\begin{fact}\label{come-on}
  Let $K$ be a field of characteristic $p$.  Suppose $\bar{a} \in K^n$
  is algebraically independent over $\Ff_p$.  Let $\bar{a}'$ be a
  subtuple of length $n-1$, obtained by removing one element.  Then
  the projection $\pi : G_{\bar{a}} \to G_{\bar{a}'}$ is isomorphic
  (in the category of algebraic groups over $K$) to the Artin-Schreier
  endomorphism $\wp : \Aa \to \Aa$.
\end{fact}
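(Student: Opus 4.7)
The strategy is to reduce the statement to the classification of separable additive polynomials $\Aa \to \Aa$ of degree $p$. Since $\bar a$ is algebraically independent over $\Ff_p$, so is $\bar a'$, and in both cases the tuple of reciprocals is automatically $\Ff_p$-linearly independent. Hence, in Hempel's extension of Fact~\ref{they-are-additive-group} \cite[Corollary~5.4]{n-dep-1}, one obtains isomorphisms of algebraic groups over $K$, $\phi_1 : G_{\bar a} \to \Aa$ and $\phi_2 : G_{\bar a'} \to \Aa$.

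Form the composition $P := \phi_2 \circ \pi \circ \phi_1^{-1} : \Aa \to \Aa$, which is given by an additive polynomial $\sum_i c_i x^{p^i} \in K[x]$. A direct computation shows that a point $(t, x_1, \ldots, x_n) \in G_{\bar a}$ lies in $\ker \pi$ iff $t = x_1 = \cdots = x_{n-1} = 0$ (assuming the removed coordinate is $x_n$), which via the defining equation $t = a_n \wp(x_n)$ forces $x_n \in \Ff_p$; thus $\ker \pi$ is a finite étale $K$-group scheme isomorphic to $\Ff_p$. Transporting through the $K$-defined $\phi_1$, the kernel $\ker P \subseteq \Aa$ is étale of order $p$ with $K$-rational points, hence of the form $\Ff_p \cdot \alpha$ for some $\alpha \in K^\times$. Étaleness of $\ker P$ forces $P$ to be separable, and $|\ker P| = p$ then pins down $P(x) = c_1 x^p + c_0 x$ with $c_1, c_0 \in K^\times$; the kernel condition yields $c_0 = -c_1 \alpha^{p-1}$.

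To finish the identification, recall that $\Aut(\Aa)$ over $K$ is just $\Gg_m$, acting by scalar multiplications $[\lambda] : x \mapsto \lambda x$. Replacing $\phi_1$ by $[\alpha^{-1}] \circ \phi_1$ shifts $\ker P$ onto $\Ff_p \subseteq \Aa$ itself, turning $P$ into a $K^\times$-multiple of $\wp$, say $P = c \cdot \wp$; then replacing $\phi_2$ by $[c^{-1}] \circ \phi_2$ gives $P = \wp$ on the nose. This produces the desired commutative square of isomorphisms witnessing $\pi \cong \wp$ in the category of algebraic groups over $K$.

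The main obstacle is handling non-perfect $K$, for which Fact~\ref{they-are-additive-group} as originally stated does not apply; this is precisely what Hempel's strengthening overcomes. Once both $G_{\bar a}$ and $G_{\bar a'}$ are identified with $\Aa$ over $K$, the rest is the routine observation that a separable additive polynomial of degree $p$ whose kernel is a $K$-rational copy of $\Ff_p$ is a $K^\times$-scalar multiple of $\wp$ after rescaling the source.
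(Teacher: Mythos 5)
Your argument is correct, but note that the paper does not prove Fact~\ref{come-on} at all: it only remarks that the statement is implicit in the proof of Theorem~4.4 of Kaplan--Scanlon--Wagner. What you have written is essentially a careful reconstruction of that implicit argument, and every step checks out: the scheme-theoretic kernel of $\pi$ is cut out by $t = x_1 = \cdots = x_{n-1} = 0$ together with $0 = a_n\wp(x_n)$, hence is the constant \'etale group $\Ff_p$ sitting in the last coordinate; transporting by $\phi_1$ gives an additive polynomial $P$ over $K$ with \'etale kernel of order $p$ consisting of $K$-points, so $P(x) = c_1x^p + c_0x$ with $c_0 = -c_1\alpha^{p-1}$ for some $\alpha \in K^\times$; and the identity $P(\alpha x) = c_1\alpha^p\,\wp(x)$ together with postcomposition by $[c^{-1}]$ finishes the normalization. (Implicitly you use that a nonzero separable additive polynomial has degree equal to its number of geometric roots, and that $\Aut_K(\Aa) = K^\times$ acting by scaling; both are standard.)

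The one point worth flagging is your treatment of non-perfect $K$. Fact~\ref{they-are-additive-group} as stated requires $K$ perfect (the KSW proof invokes the classification of one-dimensional $p$-torsion unipotent groups over perfect fields), and you lean on Hempel's strengthening to remove this. That is consistent with how the paper itself presents Hempel's result, and Bays's explicit isomorphism (cited in the paper) is the cleanest way to justify it over an arbitrary base; but if Hempel's Corollary~5.4 were read as still assuming perfectness, your proof would only cover perfect $K$. This would not affect the paper, since Fact~\ref{come-on} is only ever applied over the perfect hull $k = K^{p^\infty}$, where Fact~\ref{they-are-additive-group} suffices and your argument goes through verbatim.
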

Fact~\ref{come-on} is implicit in the proof of
\cite[Theorem~4.4]{NIPfields}.
\begin{theorem-1}
  Let $\Mm$ be a monster model of an NIP theory.  Let $K$ be a
  type-definable infinite field of characteristic $p > 0$.  Then $K$
  is Artin-Schreier closed.
\end{theorem-1}
\begin{proof}
  Suppose for the sake of contradiction that there is an
  Artin-Schreier extension $L/K$.  Let $B$ be a small set of
  parameters over which $K$ and $L$ are type-definable.  As in the
  proof of \cite[Theorem~4.4]{NIPfields}, let $k$ be the perfect hull
  $K^{p^\infty} = \bigcap_n K^{p^n}$.  Then $k$ is perfect, infinite,
  and type-definable over $B$.  Because $k$ is infinite, there is a
  $B$-indiscernible sequence $(c_i : i \in \Zz)$ in $k$, algebraically
  independent over $\Ff_p$.  By Lemma~\ref{ks-apply}, there is $N$
  such that
  \begin{equation*}
    (c_0 \cdot \wp(L \setminus K)) \cap \bigcap_{i = 1}^N (c_i \cdot
    \wp(K)) \cap \bigcap_{i = -N}^{-1} (c_i \cdot \wp(K)) =
    \varnothing.
  \end{equation*}
  Let $(a_1,\ldots,a_n) = (c_1,\ldots,c_N,c_{-1},\ldots,c_{-N},c_0)$,
  and let $\bar{a}' = \bar{a} \restriction (n-1) =
  (c_1,\ldots,c_N,c_{-1},\ldots,c_{-N})$.  Then
  \begin{equation}
    (a_n \cdot \wp(L \setminus K)) \cap \bigcap_{i = 1}^{n-1} (a_i \cdot
    \wp(K)) = \varnothing. \label{tbc}
  \end{equation}
  Because $L$ is an Artin-Schreier extension, we can write $L$ as
  $K(r)$ for some $r \in L \setminus K$ with $\wp(r) \in K$.  In particular,
  \begin{equation*}
    \text{There is $r \in \Aa(L) \setminus \Aa(K)$ with $\wp(r)
      \in \Aa(K)$.}
  \end{equation*}
  By Fact~\ref{come-on}, the projection $\pi : G_{\bar{a}} \to G_{\bar{a}'}$
  is isomorphic (in the category of algebraic groups over $k$) to the
  Artin-Schreier map $\Aa \to \Aa$.  Therefore
  \begin{equation*}
    \text{There is $\bar{s} \in G_{\bar{a}}(L) \setminus
      G_{\bar{a}}(K)$ with $\pi(\bar{s}) \in G_{\bar{a}'}(K)$.}
  \end{equation*}
  In other words, there is a tuple $(t,x_1,\ldots,x_n) \in L^{n+1}$ such that
  \begin{gather*}
    t = a_1 \cdot \wp(x_1) = a_2 \cdot \wp(x_2) = \cdots = a_n \cdot \wp(x_n) \\
    t, x_1, x_2, \ldots , x_{n-1} \in K \\
    x_n \in L \setminus K.
  \end{gather*}
  This directly contradicts Equation~(\ref{tbc}).
\end{proof}

\section{Explicit proof} \label{elementary-version}
\subsection{Additive polynomials}
Let $K$ be a field of characteristic $p$.
\begin{fact} \label{addp-facts}
  Let $G$ be a finite subgroup of $(K,+)$.  Let $f_G(x) = \prod_{a \in G}(x-a)$.
  \begin{enumerate}
  \item \label{first} $f_G$ is an additive polynomial, meaning $f_G(x+y) = f_G(x) + f_G(y)$.
  \item If $L/K$ is any extension, then $f_G$ induces an additive
    homomorphism $(L,+) \to (L,+)$ with kernel $G$.
  \item If $G = \Ff_p$, then $f_G(x) = \wp(x)$.
  \item \label{fourth} More generally, if $G = b \cdot \Ff_p$ for some $b \in
    K^\times$, then $f_G(x) = b^p \cdot \wp(x/b)$.
  \end{enumerate}
\end{fact}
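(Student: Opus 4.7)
The fact is a bundle of four assertions of decreasing substance: (1) asserts additivity of $f_G$, (2) unpacks this as a homomorphism statement, and (3),(4) are explicit identifications in the case $G = b \cdot \Ff_p$. The plan is to attack (1) directly via a polynomial-identity argument, then read off (2) as an immediate consequence and verify (3) and (4) by inspection.

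For (1), I would start by noting that $f_G(X) \in K[X]$ is monic of degree $|G|$ with root set exactly $G$. For any $a \in G$, the polynomial $f_G(X+a)$ is again monic of degree $|G|$, with root set $G - a = G$ (using that $G$ is a subgroup); hence
\[
f_G(X+a) = f_G(X) \quad \text{for every } a \in G.
\]
Now consider $h(X,Y) := f_G(X+Y) - f_G(X) - f_G(Y)$. As a polynomial in $Y$ over $K[X]$, the leading $Y^{|G|}$ terms of $f_G(X+Y)$ and $f_G(Y)$ cancel, so $\deg_Y h < |G|$. On the other hand, for each $a \in G$ the identity above gives $h(X,a) = f_G(X) - f_G(X) - 0 = 0$. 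Thus $h$, viewed as a polynomial of degree less than $|G|$ in $Y$ over the domain $K[X]$, has $|G|$ distinct roots and must vanish identically. This is the whole content of (1).

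Part (2) is then immediate: additivity of $f_G$ makes $x \mapsto f_G(x)$ a group endomorphism of $(L,+)$, and its kernel is the zero set of $f_G$ in $L$, which equals $G$ since $f_G$ already splits over $K$ with $|G|$ distinct roots, and can have at most $|G|$ roots in any extension. For (3), the monic polynomial $\prod_{a \in \Ff_p}(x-a)$ of degree $p$ with root set $\Ff_p$ must coincide with $x^p - x$, which has the same root set and is also monic of degree $p$. For (4), a one-line substitution gives $\prod_{a \in \Ff_p}(x - ba) = b^p \prod_{a \in \Ff_p}(x/b - a) = b^p \wp(x/b)$. The only point that requires genuine thought is the polynomial-degree argument in (1); once that is in hand, nothing else should cause trouble.
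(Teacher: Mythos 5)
Your proof is correct, but it takes a different route from the paper, which does not actually write out an argument for this fact: it defers part (\ref{first}) to Kuhlmann's Corollary~12.3(a), or alternatively suggests an induction on $|G|$ riding on the proof of Lemma~\ref{third}, i.e., building $f_H$ up as a composition $f_C \circ f_G$ of additive polynomials of smaller degree, so that additivity is inherited one layer at a time. Your argument is instead a direct, self-contained polynomial identity: the translation invariance $f_G(X+a) = f_G(X)$ for $a \in G$ (valid by reindexing the product over the group), combined with the observation that $h(X,Y) = f_G(X+Y) - f_G(X) - f_G(Y)$ has $Y$-degree strictly less than $|G|$ after the leading terms cancel, yet vanishes at the $|G|$ distinct points $Y = a \in G$ over the integral domain $K[X]$, forcing $h \equiv 0$. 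This is the standard slick proof and arguably cleaner than the inductive route, since it needs no auxiliary decomposition; what the paper's suggested induction buys is that the same construction (complements $C_0$ and the factorization $f_H = f_C \circ f_G$) is needed anyway for Lemma~\ref{third} and Corollary~\ref{by-one}, so no extra machinery is introduced. Your treatments of (2)--(4) are routine and match what the paper implicitly intends; the only point worth making explicit in (2) is the one you do make, namely that $f_G$ has at most $|G|$ roots in any extension $L$, so the kernel cannot grow beyond $G$.
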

Fact~\ref{addp-facts} is a fairly easy exercise.  Part (\ref{first})
is proved in \cite[Corollary~12.3(a)]{Kuhlmann}, or one can proceed by
induction on $|G|$ using the proof of the following lemma.
\begin{lemma} \label{third}
  Let $G \subseteq H$ be finite subgroups of $(K,+)$.  Then there is a
  finite subgroup $C \subseteq (K,+)$ such that $|C| = |H : G|$ and
  $f_H(x) = f_C(f_G(x))$.
\end{lemma}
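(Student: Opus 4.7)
The plan is to take $C := f_G(H)$, the image of the finite subgroup $H$ under the additive endomorphism of $(K,+)$ induced by $f_G$ via Fact~\ref{addp-facts}. First I would verify that $C$ is a finite subgroup of $(K,+)$ (immediate, as the homomorphic image of a group is a group) and that $|C| = |H:G|$. The second point is just the first isomorphism theorem applied to $f_G|_H$: its kernel is $H \cap \ker(f_G) = H \cap G = G$, so $|C| = |f_G(H)| = |H|/|G| = |H:G|$.

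With $C$ in hand, I would prove the identity $f_H(x) = f_C(f_G(x))$ by showing that both sides are monic polynomials of the same degree with the same root set. Monicity of $f_C \circ f_G$ is clear from monicity of $f_C$ and $f_G$. For the degree, $\deg f_C(f_G(x)) = |C| \cdot |G| = |H:G| \cdot |G| = |H| = \deg f_H$. For the roots, observe that if $h \in H$, then $f_G(h) \in f_G(H) = C$ by construction, so $f_C(f_G(h)) = \prod_{c \in C}(f_G(h) - c) = 0$; meanwhile $f_H$ visibly vanishes on $H$. Hence $f_C \circ f_G$ is a monic polynomial of degree $|H|$ with $H$ as a subset of its zero set, and the same holds for $f_H$, so the two must be equal.

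The only step that uses anything beyond bookkeeping is the identification $\ker f_G = G$ (equivalently, the fact that $f_G$ is additive with the prescribed kernel), which is part of Fact~\ref{addp-facts}; no real obstacle is anticipated. I note that this argument fits the remark following Fact~\ref{addp-facts}: once one knows Fact~\ref{addp-facts} for the proper subgroup $G \subsetneq H$, the lemma expresses $f_H$ as a composition $f_C \circ f_G$ of additive polynomials attached to strictly smaller groups, which gives the inductive step for proving additivity of $f_H$.
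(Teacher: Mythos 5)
Your proof is correct and essentially the same as the paper's: the paper takes $C = f_G(C_0)$ for a complement $C_0$ of $G$ in $H$, which is the same subgroup as your $C = f_G(H)$, and then runs the identical monic-degree-plus-roots comparison. Your use of the first isomorphism theorem to get $|C| = |H:G|$ just sidesteps the explicit choice of complement, which is a harmless simplification.
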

\begin{proof}
  The subgroups $G$ and $H$ are finite-dimensional $\Ff_p$-linear
  subspaces of $K$.  Let $C_0$ be a complement of $G$ in $H$, meaning
  $C_0 \cap G = 0$ and $C_0 + G = H$.  Let $C = f_G(C_0)$.  Then $C$ is an additive subgroup because $f_G$ is an additive homomorphism, and $C \cong C_0$ because $C_0 \cap \ker(f_G) = C_0 \cap G =
  0$.  In
  particular, $|C| = |C_0| = |H : G|$.  Then $f_C(f_G(x))$ is a monic
  additive polynomial of degree $|C| \cdot |G| = |H|$.  We have
  $f_G(G) = 0$, so $f_C(f_G(G)) = 0$.  Similary, $f_C(f_G(C_0)) =
  f_C(C) = 0$.  So the kernel of $f_C \circ f_G$ contains both $G$
  and $C_0$, and therefore all of $H$.  The monic polynomial
  $f_C(f_G(x))$ has degree $H$ and has a zero at every element of $H$,
  so it must be $\prod_{a \in H}(x-a) = f_H(x)$.  Thus $f_H(x) =
  f_C(f_G(x))$.
\end{proof}
We will use the following two corollaries of Lemma~\ref{third}:
\begin{corollary} \label{by-one}
  If $G \subseteq H$ are finite subgroups of $(K,+)$ with $|H : G| =
  p$, then there is $b \in K^\times$ such that $f_H(x) = b^p \cdot
  \wp(f_G(x)/b)$.
\end{corollary}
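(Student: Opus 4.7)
The plan is to chain Lemma~\ref{third} with part~(\ref{fourth}) of Fact~\ref{addp-facts}. First I would apply Lemma~\ref{third} to the pair $G \subseteq H$: since $|H:G| = p$, this produces a finite subgroup $C \subseteq (K,+)$ of order $p$ satisfying $f_H(x) = f_C(f_G(x))$.

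Next, I would observe that any subgroup of $(K,+)$ of order $p$ has the form $b \cdot \Ff_p$ for some $b \in K^\times$. This is immediate because $(K,+)$ is an $\Ff_p$-vector space (as $\characteristic(K) = p$), so a subgroup of order $p$ is a one-dimensional $\Ff_p$-subspace, spanned by any nonzero element $b \in C$.

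Having written $C = b \cdot \Ff_p$, I would invoke part~(\ref{fourth}) of Fact~\ref{addp-facts}, which gives $f_C(x) = b^p \cdot \wp(x/b)$. Substituting $f_G(x)$ for $x$ in this identity yields
\begin{equation*}
  f_H(x) = f_C(f_G(x)) = b^p \cdot \wp(f_G(x)/b),
\end{equation*}
as required.

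There is no real obstacle here; the corollary is a direct assembly of Lemma~\ref{third} (which produces $C$) and the explicit formula for $f_C$ when $|C| = p$, combined with the trivial observation that any order-$p$ additive subgroup of $K$ is $\Ff_p \cdot b$.
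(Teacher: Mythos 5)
Your proof is correct and is essentially the paper's own argument: apply Lemma~\ref{third} to get $C$ of order $p$ with $f_H = f_C \circ f_G$, identify $C$ as $b \cdot \Ff_p$, and conclude via Fact~\ref{addp-facts}(\ref{fourth}). The only difference is that you spell out the (trivial) step that an order-$p$ subgroup is $\Ff_p \cdot b$, which the paper leaves implicit.
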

\begin{proof}
  By Lemma~\ref{third} there is $C$ of order $p$ such that $f_H = f_C
  \circ f_H$.  Then $f_C(x) = b^p \cdot \wp(x/b)$ for some $b \in
  K^\times$, by Fact~\ref{addp-facts}.
\end{proof}
\begin{corollary} \label{up-cor}
  Let $G \subseteq H$ be finite subgroups of $(K,+)$.  Let $L/K$ be a
  field extension and $a \in L$ be an element.  If $f_G(a) \in K$,
  then $f_H(a) \in K$.
\end{corollary}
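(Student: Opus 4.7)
The plan is to apply Lemma~\ref{third} essentially directly. By that lemma, there exists a finite subgroup $C \subseteq (K,+)$ such that $f_H(x) = f_C(f_G(x))$ as polynomials. The key observation is that since $C$ is a subset of $K$, the polynomial $f_C(x) = \prod_{c \in C}(x - c)$ has coefficients in $K$ (they are elementary symmetric functions of the elements of $C$). Consequently $f_C$ sends $K$ into $K$.

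Assuming $f_G(a) \in K$, we then have $f_H(a) = f_C(f_G(a))$, which is the value of a polynomial with coefficients in $K$ at an element of $K$, hence lies in $K$. This gives the desired conclusion in one step.

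There is no real obstacle here: the content has already been packaged into Lemma~\ref{third}, and the corollary is just the observation that the outer polynomial $f_C$ is defined over $K$ whenever $C \subseteq K$. The only thing to double-check is that Lemma~\ref{third} does produce $C$ inside $K$ (which it does, since $C = f_G(C_0)$ with $C_0 \subseteq H \subseteq K$ and $f_G$ has coefficients in $K$), so the evaluation $f_C(f_G(a))$ genuinely stays in $K$ whenever $f_G(a)$ does.
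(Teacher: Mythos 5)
Your proof is correct and is essentially identical to the paper's: both invoke Lemma~\ref{third} to write $f_H = f_C \circ f_G$ with $C \subseteq K$, and then note that $f_C$ has coefficients in $K$, so $f_H(a) = f_C(f_G(a)) \in K$. Your extra remarks about why $C$ lands in $K$ are a harmless elaboration of the same argument.
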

\begin{proof}
  By Lemma~\ref{third} there is $C \subseteq K$ with $f_H(x) =
  f_C(f_G(x))$.  Then $f_H(a) = f_C(f_G(a))$, and $f_C$ has
  coefficients in $K$.
\end{proof}
\begin{lemma} \label{down-lemma}
  Let $G_1, \ldots, G_n$ be finite subgroups of
  $(K,+)$ with $\bigcap_{i = 1}^n G_i = 0$.  Let $L/K$ be a field
  extension.  Suppose $a \in L$, and $f_{G_i}(a) \in K$ for $i = 1,
  \ldots, n$.  Then $a \in K$.
\end{lemma}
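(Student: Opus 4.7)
The plan is to exploit the separability of the polynomials $f_{G_i}$. For each $i$, set $p_i(x) := f_{G_i}(x) - f_{G_i}(a)$. Since $f_{G_i}$ has coefficients in $K$ and $f_{G_i}(a) \in K$ by hypothesis, we have $p_i(x) \in K[x]$. Moreover, additivity of $f_{G_i}$ gives the factorization $p_i(x) = \prod_{g \in G_i}(x - (a + g))$ in $L[x]$, and these $|G_i|$ roots are pairwise distinct, so $p_i$ is a separable polynomial in $K[x]$.

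From $p_1(a) = 0$, the element $a$ is algebraic and separable over $K$; let $m(x) \in K[x]$ be its minimal polynomial. Then $m$ divides each $p_i$ in $K[x]$, so every root of $m$ (in an algebraic closure of $L$) is a common root of all the $p_i$. But the common roots of the $p_i$ are exactly $\{a + g : g \in \bigcap_{i=1}^n G_i\} = \{a\}$, using the hypothesis $\bigcap_i G_i = 0$. Since $m$ divides the separable polynomial $p_1$, $m$ itself is separable, so its degree equals the number of its distinct roots, which is $1$. Therefore $m(x) = x - a \in K[x]$, proving $a \in K$.

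The only mildly delicate point is the separability of each $f_{G_i}$, which is immediate because $f_{G_i}$ has $|G_i|$ pairwise distinct roots (the elements of $G_i$); everything else is a routine application of root counting and the minimal polynomial. One could equivalently run the argument by Galois theory: any $\sigma$ in the Galois group of the splitting field of $m$ over $K$ satisfies $f_{G_i}(\sigma(a) - a) = \sigma(f_{G_i}(a)) - f_{G_i}(a) = 0$, hence $\sigma(a) - a \in G_i$ for every $i$, hence $\sigma(a) = a$; but the minimal-polynomial framing is slightly cleaner.
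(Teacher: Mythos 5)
Your proof is correct and is essentially the paper's argument in minimal-polynomial clothing: both rest on the observation that the roots of $f_{G_i}(x) - f_{G_i}(a)$ are exactly the coset $a + G_i$, so that the hypothesis $\bigcap_i G_i = 0$ pins down $a$ as the unique common root, combined with separability of the $f_{G_i}$. The paper phrases this via automorphisms of a monster model of $\ACF_p$ over $K$ (concluding $a \in \dcl(K)$ and $a \in K^{sep}$, hence $a \in K$), which is precisely the Galois-theoretic variant you mention at the end, so the two write-ups are interchangeable.
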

\begin{proof}
  Note that $n \ge 1$, or else the intersection would not vanish.
  Work in a monster model $\Mm \models \ACF_p$ extending $L$.  Let
  $b_i = f_{G_i}(a)$.  If $\sigma \in \Aut(\Mm/K)$, then
  $f_{G_i}(\sigma(a)) = \sigma(b) = b = f_{G_i}(a)$, so $\sigma(a) - a
  \in \ker(f_{G_i}) = G_i$.  This holds for all $i$, so $\sigma(a) - a
  \in \bigcap_i G_i = 0$, showing that $a$ is fixed by $\sigma$ and $a
  \in \dcl(K)$.  It remains to show $a \in K^{sep}$.  This holds
  because $a$ is a root of the polynomial $f_{G_1}(x) - b_1$, which
  has degree $|G_1|$, and $|G_1|$ roots (the elements of $a + G_1$).
\end{proof}
\subsection{$(L,K)$-contrary tuples}
Let $L/K$ be an Artin-Schreier extension in characteristic $p$.
\begin{definition} \label{contra-def}
  A tuple $\bar{b} \in K^n$ is \emph{$(L,K)$-contrary} if for all $1 \le i
  \le n$ we have
  \begin{equation*}
    (b_i \cdot \wp(L \setminus K)) \cap \bigcap_{j \ne i} (b_j \cdot
    \wp(K)) \ne \varnothing.
  \end{equation*}
\end{definition}
Note that Definition~\ref{contra-def} is spiritually the opposite of
Lemma~\ref{ks-apply}.
\begin{lemma} \label{contrariness}
  Let $L/K$ be an Artin-Schreier extension.
  \begin{enumerate}
  \item \label{con1} If $(b_1,\ldots,b_n)$ is $(L,K)$-contrary, so is any
    permutation $(b_{\sigma(1)},\ldots,b_{\sigma(n)})$.
  \item \label{con2} If $(b_1,\ldots,b_n)$ is $(L,K)$-contrary, so is any subtuple.
  \item \label{con3} If $L, K$ are type-definable in some structure $\Mm$, then the
    set of $(L,K)$-contrary $n$-tuples in $K$ is type-definable.
  \end{enumerate}
\end{lemma}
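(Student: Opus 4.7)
My plan is to dispatch parts (\ref{con1}) and (\ref{con2}) by direct inspection of the definition, and to handle the substantive part (\ref{con3}) via the standard compactness fact that projections of type-definable sets are type-definable in saturated models.

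For (\ref{con1}), the defining condition of $(L,K)$-contrariness quantifies symmetrically over all $i \in \{1, \ldots, n\}$ and, for each such $i$, over all $j \ne i$, so relabelling coordinates leaves the condition unchanged.

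For (\ref{con2}), combining (\ref{con1}) with iteration reduces the claim to showing that removing a single entry, say $b_n$, preserves contrariness. For each $i \le n - 1$ the relevant subtuple intersection $(b_i \cdot \wp(L \setminus K)) \cap \bigcap_{j \ne i,\, j \le n-1} (b_j \cdot \wp(K))$ contains the hypothesis intersection $(b_i \cdot \wp(L \setminus K)) \cap \bigcap_{j \ne i,\, j \le n} (b_j \cdot \wp(K))$, which is nonempty by assumption, so nonemptiness is inherited.

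For (\ref{con3}), the key principle is that in the saturated model $\Mm$, whenever $X \subseteq \Mm^{k+n}$ is type-definable by a partial type $\pi(\bar x, \bar y)$, the projection $\{\bar y : \exists \bar x\, (\bar x, \bar y) \in X\}$ is itself type-definable: by saturation, $\bar y$ lies in the projection iff $\pi(\bar x, \bar y)$ is satisfiable in $\bar x$, iff every finite subtype $\pi_0 \subseteq \pi$ yields a definable condition $\exists \bar x\, \pi_0(\bar x, \bar y)$ on $\bar y$, and the conjunction of these is type-definable. I would apply this to the type-definable preimage $\{(\bar b, y, (x_j)_{j \ne i}) \in K^n \times (L \setminus K) \times K^{n-1} : b_i \cdot \wp(y) = b_j \cdot \wp(x_j) \text{ for all } j \ne i\}$; projecting to $\bar b$ yields a type-definable condition on $\bar b$ expressing the $i$-th clause of $(L,K)$-contrariness. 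Intersecting over $i = 1, \ldots, n$ gives the set of $(L,K)$-contrary tuples as a finite intersection of type-definable sets. This is precisely the projection mechanism the author tacitly uses in the proof of Lemma~\ref{ks-apply} to ensure that $\wp(K)$ and $\wp(L \setminus K) \cap K$ are type-definable, so no new input is required; the only obstacle is to apply it cleanly.
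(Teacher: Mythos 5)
Your proposal is correct and follows essentially the same route as the paper: parts (\ref{con1}) and (\ref{con2}) by direct inspection (the paper restricts the tuple of witnesses where you observe monotonicity of the intersections, which amounts to the same thing), and part (\ref{con3}) by writing the contrary tuples as a projection of a type-definable set of witnesses, with the saturation/compactness justification that the paper leaves implicit.
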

\begin{proof}
  Permutation invariance is clear.  By definition, $(b_1,\ldots,b_n)
  \in K^n$ is $(L,K)$-contrary if there exist $y_i$ and $x_{i,j}$ such
  that
  \begin{gather*}
    y_i = b_j \cdot \wp(x_{i,j}) \\
    x_{i,j} \in L \\
    x_{i,j} \in K \iff i \ne j.
  \end{gather*}
  If $\{y_i\}_{i \le n}$ and $\{x_{i,j}\}_{i,j \le n}$ witness that
  $(b_1,\ldots,b_n)$ is $(L,K)$-contrary, then $\{y_i\}_{i < n}$ and
  $\{x_{i,j}\}_{i,j < n}$ witness that $(b_1,\ldots,b_{n-1})$ is
  $(L,K)$-contrary.

  Finally, in the case where $K, L$ are type-definable, the set of
  $(L,K)$-contrary tuples is the projection of the type-definable set
  of triples $(\bar{b},\bar{x},\bar{y})$ satisfying the above
  conditions, which are clearly type-definable.
\end{proof}

\begin{lemma}\label{generator}
  Suppose $L/K$ is an Artin-Schreier extension and $K$ is infinite.
  For every $n \ge 2$, there is an $(L,K)$-contrary $n$-tuple in $K$.
\end{lemma}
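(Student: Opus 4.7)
The plan is to construct an explicit $(L,K)$-contrary $n$-tuple of the form $(\beta_1^p,\dots,\beta_n^p)$, where the $\beta_i\in K^\times$ will arise by applying Corollary~\ref{by-one} to a chain of finite $\Ff_p$-subspaces of $K$, and where each witness $y_i$ required by Definition~\ref{contra-def} will be produced by evaluating $f_G$ at an explicit test element $b_i r\in L$.

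To set up the construction, fix $r\in L\setminus K$ with $c:=\wp(r)\in K$, so that $L=K(r)$. Because $K$ is infinite of characteristic $p$, it has infinite $\Ff_p$-dimension, so I may choose $b_1,\dots,b_n\in K^\times$ that are $\Ff_p$-linearly independent. Define
\[
G:=\bigoplus_{i=1}^n \Ff_p b_i,\qquad G_i:=\bigoplus_{j\ne i}\Ff_p b_j,
\]
so that $[G:G_i]=p$; Corollary~\ref{by-one} then supplies $\beta_i\in K^\times$ with $f_G(x)=\beta_i^p\,\wp(f_{G_i}(x)/\beta_i)$ for every $i$. The candidate contrary tuple is $(\beta_1^p,\dots,\beta_n^p)$.

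The heart of the proof will be a ``coefficient of $r$'' computation: for any finite $\Ff_p$-subspace $G'\subseteq K$ and any $b\in K$,
\[
f_{G'}(br) \;=\; f_{G'}(b)\cdot r \;+\; \kappa
\]
for some $\kappa\in K$. This follows by writing the additive polynomial $f_{G'}$ as a $p$-polynomial $\sum_k \alpha_k x^{p^k}$ with $\alpha_k\in K$ and substituting the identity $r^{p^k}=r+c+c^p+\cdots+c^{p^{k-1}}$, which comes from iterating $r^p=r+c$. Since $1$ and $r$ are $K$-linearly independent in $L$, this yields the criterion $f_{G'}(br)\in K \iff b\in G'$. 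Setting $r_i:=b_i r$ and applying the criterion with $G'$ equal to $G$, to $G_j$ for $j\ne i$, and to $G_i$ respectively, the $\Ff_p$-linear independence of the $b_j$ gives $b_i\in G$, $b_i\in G_j$ for $j\ne i$, but $b_i\notin G_i$; hence $y_i:=f_G(r_i)\in K$, $f_{G_j}(r_i)\in K$ for $j\ne i$, and $f_{G_i}(r_i)\in L\setminus K$. Substituting into the identity $y_i=\beta_j^p\,\wp(f_{G_j}(r_i)/\beta_j)$ then gives $y_i\in\beta_j^p\wp(K)$ for $j\ne i$ and $y_i\in\beta_i^p\wp(L\setminus K)$, which is exactly the $i$th condition of Definition~\ref{contra-def}.

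The only real obstacle is discovering the construction: one has to choose the subspaces $G,G_1,\dots,G_n$ and the test elements $b_1 r,\dots,b_n r$ so that the $\Ff_p$-linear independence of $b_1,\dots,b_n$ translates precisely into the containment pattern ``$b_i\in G_j \iff j\ne i$'' demanded by the contrariness definition. Once that alignment is in place, the verification is just a mechanical combination of Corollary~\ref{by-one} with the Artin-Schreier reduction $r^{p^k}\equiv r\pmod K$.
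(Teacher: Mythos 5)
Your construction is exactly the one in the paper: the same $\Ff_p$-linearly independent elements, the same subspaces (your $G, G_i$ are the paper's $T, W_i$), the same appeal to Corollary~\ref{by-one} to produce the tuple $(\beta_1^p,\dots,\beta_n^p)$, and the same witnesses $b_i r$ (the paper uses $\alpha\cdot a_1$ and then invokes permutation symmetry). The one place you genuinely diverge is in verifying which of the values $f_{G_j}(b_i r)$ lie in $K$: the paper gets $f_{G_j}(b_i r)\in K$ for $j\ne i$ from the factorization $f_{G_j}=f_C\circ f_{V_i}$ (Corollary~\ref{up-cor}) and gets $f_{G_i}(b_i r)\notin K$ by contradiction via the Galois-theoretic Lemma~\ref{down-lemma}, whereas you prove the clean two-way criterion $f_{G'}(br)\in K\iff b\in G'$ by expanding the $p$-polynomial $f_{G'}$ and using $r^{p^k}\equiv r\pmod K$. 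Your criterion is correct (the coefficient of $r$ is $f_{G'}(b)$, and $1,r$ are $K$-independent), and it is arguably more self-contained: it handles both the positive and negative memberships at once and lets you bypass Lemma~\ref{down-lemma} entirely for this lemma. The trade-off is that the paper's route stays within the structural language of the factorization lemmas (which it then reuses to motivate the hypercube picture in its final section), while yours substitutes a short explicit computation.
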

\begin{proof}
  Let $a_1,\ldots,a_n \in K$ be $\Ff_p$-linearly independent.  Let
  $V_i$ be $\Ff_p \cdot a_i$.  Let $T = V_1 + \cdots + V_n$, the
  $\Ff_p$-linear span of $\bar{a}$.  Let $W_i$ be the $\Ff_p$-linear
  span of $\{a_1,\ldots,a_{i-1},a_{i+1},\ldots,a_n\}$.  Note $W_i
  \supseteq V_j \iff i \ne j$, and $\bigcap_{i = 1}^n W_i = 0$.
  Additionally, $f_{V_i}(x) = a_i^p \cdot \wp(x/a_i)$ by
  Fact~\ref{addp-facts}(\ref{fourth}).

  By Corollary~\ref{by-one}, there are $b_i \in K^\times$ such that
  \begin{equation}
    f_T(x) = b_i^p \cdot \wp(f_{W_i}(x)/b_i). \label{use-1}
  \end{equation}
  We claim $(b_1^p,\ldots,b_n^p)$ is $(L,K)$-contrary.  By symmetry it
  suffices to show
  \begin{equation}
    (b_1^p \cdot \wp(L \setminus K)) \cap \bigcap_{i = 2}^n (b_i^p \cdot
    \wp(K)) \stackrel{?}{\ne} \varnothing. \label{goal-1}
  \end{equation}
  As $L/K$ is an Artin-Schreier extension, $L = K(\alpha)$ for some
  $\alpha \in L \setminus K$ with $\wp(\alpha) \in K$.  The map
  $f_{V_1}(x)$ is a twist of $\wp(x)$ so we can similarly find $\beta
  \in L \setminus K$ with $f_{V_1}(\beta) \in K$.  More precisely, let
  $\beta = \alpha \cdot a_1$.  Then $\beta \in L \setminus K$, and
  \begin{equation*}
    f_{V_1}(\beta) = a_1^p \cdot \wp(\beta/a_1) = a_1^p \cdot \wp(\alpha) \in K.
  \end{equation*}
  For $i \ne 1$, we have $W_i \supseteq V_1$, and so $f_{W_i}(\beta) \in
  K$ by Corollary~\ref{up-cor}.  We claim $f_{W_1}(\beta) \in L
  \setminus K$.  Otherwise, $f_{W_i}(\beta) \in K$ for all $i$, and
  then $\beta \in K$ by Lemma~\ref{down-lemma}, a contradiction.

  By Equation (\ref{use-1}), $f_T(\beta) = b_i^p \cdot
  \wp(f_{W_i}(\beta)/b_i)$ for each $i$.  Now $f_{W_1}(\beta)/b_1 \in L \setminus
  K$ and $f_{W_i}(\beta)/b_i \in K$ for $i \ne 1$.  Thus Equation (\ref{goal-1})
  holds:
  \begin{equation*}
    f_T(\beta) \in (b_1^p \cdot \wp(L \setminus K)) \cap \bigcap_{i =
      2}^n (b_i^p \cdot \wp(K)). \qedhere
  \end{equation*}
\end{proof}

\begin{theorem-1}
  Let $\Mm$ be a monster model of an NIP theory.  Let $K$ be a
  type-definable infinite field of characteristic $p > 0$.  Then $K$
  is Artin-Schreier closed.
\end{theorem-1}
\begin{proof}
  Otherwise, take a non-trivial Artin-Schreier extension $L/K$.  Let
  $B$ be a small set of parameters defining $K$ and $L$.  By
  Lemma~\ref{contrariness}(\ref{con3}), there is a partial type
  $\Sigma(x_i : i \in \Zz)$ saying that every finite subtuple is
  $(L,K)$-contrary.  By Lemma~\ref{generator} and
  Lemma~\ref{contrariness}(\ref{con2}), $\Sigma$ is finitely
  satisfiable.  Then there is some tuple $\bar{c} = (c_i : i \in \Zz)$
  satisfying $\Sigma$.  Extracting an indiscernible sequence, we may
  assume $\bar{c}$ is $B$-indiscernible.  By Lemma~\ref{ks-apply},
  there is $N$ such that
  \begin{equation*}
    (c_0 \cdot \wp(L \setminus K)) \cap \bigcap_{i = 1}^N (c_i \cdot
    \wp(K)) \cap \bigcap_{i = -N}^{-1} (c_i \cdot \wp(K)) =
    \varnothing.
  \end{equation*}
  This contradicts the fact that $(c_{-N},\ldots,c_N)$ is
  $(L,K)$-contrary.
\end{proof}

\section{Artin-Schreier pullback hypercubes} \label{comparison}
This section contains no proofs, but is included to informally
motivate the proof in Section~\ref{elementary-version} and explain the
parallel between Sections~\ref{alg-groups-version} and
\ref{elementary-version}.

If $n$ is a positive integer, let $[n] = \{1,\ldots,n\}$, and let
$P_n$ be the poset of subsets of $[n]$, viewed as a category.  If $C$
is a category, an $n$-dimensional \emph{hypercube} in $C$ is a functor
$F : P_n \to C$.  For $A \subseteq [n]$, we write $F_A$ rather than $F(A)$.  A \emph{pullback hypercube} is a hypercube $F : P_n
\to C$ such that for any $A, B \subseteq [n]$, the following diagram
is a pullback square:
\begin{equation*}
  \xymatrix{F_A \ar[r] & F_{A \cup B} \\ F_{A \cap B} \ar[r] \ar[u] &
    F_B \ar[u].}
\end{equation*}
There are two simple ways to construct pullback hypercubes.
\begin{example}\label{direct-construction}
  Let $C$ be a category with fiber products.  Let $X$ be an object in
  $C$ and let $f_i : Y_i \to X$ be a morphism for each $i \in [n]$.
  Then there is a natural pullback hypercube $F$ in which $F_A$ is
  fiber product $Y_{i_1} \times_X Y_{i_2} \times_X \cdots \times_X
  Y_{i_k}$, where $\{i_1,\ldots,i_k\} = [n] \setminus A$.
\end{example}
\begin{example}\label{reverse-construction}
  Consider the category of abelian groups.  Say that a pullback
  hypercube is \emph{surjective} if all the connecting maps $F_A \to
  F_B$ (for $A \subseteq B$) are surjective.  Let $G$ be an abelian
  group and let $H$ be a subgroup.  Suppose $H$ is a finite direct sum
  $H_1 \oplus H_2 \oplus \cdots \oplus H_n$.  Then there is a
  surjective pullback hypercube $F$ in which $F_A$ is
  $G/\left(\bigoplus_{i \in A} H_i\right)$.  All surjective pullback
  hypercubes in the category of abelian groups arise in this way.
\end{example}
We call Example~\ref{direct-construction} the \emph{top-down}
construction, and Example~\ref{reverse-construction} the
\emph{bottom-up} construction.
\begin{remark} \label{pull-to-bs}
  Let $F$ be an $n$-dimensional pullback hypercube in the category of
  abelian groups.  Let $G_i$ be the image of $F_{[n] \setminus \{i\}}
  \to F_{[n]}$.  It is easy to see that the following are equivalent
  for each $i \in [n]$:
  \begin{enumerate}
  \item $F_{\varnothing} \to F_{\{i\}}$ is surjective.
  \item $G_i \supseteq G_1 \cap \cdots \cap G_{i-1} \cap G_{i+1} \cap
    \cdots \cap G_n$.
  \end{enumerate}
  If \emph{none} of the maps $F_{\varnothing} \to F_{\{i\}}$ is
  surjective, then
  \begin{equation*}
    G_i \not \supseteq G_1 \cap \cdots \cap G_{i-1} \cap G_{i+1} \cap
    \cdots \cap G_n
  \end{equation*}
  for all $i$.  This is a failure of a Baldwin-Saxl-like condition.
\end{remark}

Now consider the category of abelian algebraic groups over a field $K$
of positive characteristic.  An \emph{Artin-Schreier hypercube} is a
pullback hypercube $F$ satisfying the following additional conditions:
\begin{itemize}
\item $F_{A}$ is isomorphic to the additive group $\Aa$ for all $A
  \subseteq [n]$.
\item Every connective morphism $F_A \to F_B$ is geometrically
  surjective (that is, surjective on $K^{alg}$-points).
\item When $A \subseteq B$ and $|B| = |A| + 1$, the connecting map
  $F_A \to F_B$ is isomorphic to an Artin-Schreier morphism $\Aa
  \to \Aa$.
\end{itemize}
Suppose we have an Artin-Schreier hypercube over $K$, and $K$ is not
Artin-Schreier closed.  Then the Artin-Schreier map $K \to K$ is not
surjective, and therefore none of the connecting maps $F_A(K) \to
F_B(K)$ are surjective (unless $A = B$).  Let $G_i$ be the image of
$F_{[n] \setminus \{i\}}(K) \to F_{[n]}(K)$.  By
Remark~\ref{pull-to-bs}, we have
\begin{equation*}
  G_i \not \supseteq G_1 \cap \cdots \cap G_{i-1} \cap G_{i+1} \cap
  \cdots \cap G_n
\end{equation*}
for all $i$.  \emph{If} we can take $n$ arbitrarily large, then the
field $(K,+,\cdot)$ fails to be NIP by the Baldwin-Saxl lemma.

In order to prove the Kaplan-Scanlon-Wagner theorem (for example), it
suffices to find Artin-Schreier hypercubes for $n$ arbitrarily large.
There are two ways to do this---the top-down construction of
Example~\ref{direct-construction}, and the bottom-up construction of
Example~\ref{reverse-construction}.\footnote{\emph{All} pullback
hypercubes arise via the top-down construction.  The maps in an
Artin-Schreier hypercube are ``surjective'' in some sense, so every
Artin-Schreier hypercube also arises via the bottom-up construction.}

With the top-down construction, we start with algebraic groups $X,
Y_1, \ldots, Y_n$ and morphisms $f : Y_i \to X$, which will form the
top corner of the pullback hypercube.  By definition of
``Artin-Schreier hypercube'' we may as well take $X = Y_1 = \cdots =
Y_n = \Aa$, and $f_i : \Aa \to \Aa$ given by $f_i(x) = a_i \cdot
\wp(x)$, for some $a_i \in K^\times$.  Applying the top-down
construction, we get the groups $G_{\bar{a}}$ of
Definition~\ref{ga-def}.  Taking $n = 3$ for simplicity, we get this
pullback cube:
\begin{equation*}
  \xymatrix{
    & G_{a_1} \ar[rr] & & \Aa \\
    G_{a_1,a_2} \ar[ur] \ar[rr] & & G_{a_2} \ar[ur] & \\
    & G_{a_1,a_3} \ar[uu] \ar[rr] & & G_{a_3} \ar[uu] \\
    G_{a_1,a_2,a_3} \ar[uu] \ar[rr] \ar[ur] & & G_{a_2,a_3} \ar[ur] \ar[uu] &
  }
\end{equation*}
In order for this to be an Artin-Schreier hypercube, we need each
$G_{\bar{a}}$ to be isomorphic to $\Aa$.  This doesn't necessarily
hold, and the point of Corollary~2.9 in \cite{NIPfields}
(Fact~\ref{they-are-additive-group} above) is to specify a sufficient
condition on $\bar{a}$ to ensure everything works.

With the bottom-up construction, we start with the additive group
$\Aa$ and subgroups $H_1, H_2, \ldots, H_n$ such that $H_1 + \cdots
+ H_n$ is an internal direct sum.  We then define $F_A$ to be the
quotient\footnote{The quotients are taken in the category of abelian algebraic groups, which requires some caution.  If $G$ is an abelian algebraic group and $H$ is an algebraic subgroup, then $(G/H)(K)$ is not in
general isomorphic to $G(K)/H(K)$.  But we should think of
$K$-varieties via their $K^{alg}$-points, and $(G/H)(K^{alg})$
\emph{is} isomorphic to $G(K^{alg})/H(K^{alg})$, essentially because
ACF eliminates imaginaries.  When $K$ is perfect, the $K$-points of $G/H$ correspond to the $K$-definable cosets of $H(K^{alg})$ in $G(K^{alg})$.} of $\Aa$ by $\sum_{i \in A} H_i$.  In
order to get an Artin-Schreier hypercube, $H_i$ must be a finite
subgroup of order $p$, hence $\Ff_p \cdot a_i$ for some $a_i \in K$.
The requirement that $H_1 + \cdots + H_n$ is an internal direct sum
then says that $\{a_1,\ldots,a_n\}$ is $\Ff_p$-linearly independent.
Finally, $F_A$ is $\Aa$ modulo the $\Ff_p$-linear span of $\{a_i : i \in
A\}$.  Taking $n = 3$ for simplicitly, we get the following pullback
cube:
\begin{equation*}
  \xymatrix{
    & \Aa/\langle a_2, a_3 \rangle \ar[rr] & & \Aa/\langle a_1, a_2, a_3 \rangle \\
    \Aa/\langle a_3 \rangle \ar[ur] \ar[rr] & & \Aa/\langle a_1, a_3 \rangle \ar[ur] & \\
    & \Aa/\langle a_2 \rangle \ar[uu] \ar[rr] & & \Aa/\langle a_1, a_2 \rangle \ar[uu] \\
    \Aa \ar[uu] \ar[rr] \ar[ur] & & \Aa/\langle a_1 \rangle \ar[ur] \ar[uu] &
  } 
\end{equation*}
where $\langle \ldots \rangle$ denotes the $\Ff_p$-linear span of
$\{\ldots\}$.  Each of the groups $\Aa/G$ is isomorphic to $\Aa$ via
the polynomial $f_G$ of Fact~\ref{addp-facts}.  The individual arrows
are isomorphic to the Artin-Schreier map by Corollary~\ref{by-one}.


In summary, there are two different ways to construct Artin-Schreier
hypercubes.  This gives two different proofs of the
Kaplan-Scanlon-Wagner theorem on NIP fields.  The top-down
construction corresponds to the original proof in \cite{NIPfields},
while the bottom-up construction corresponds to a new proof.

This technique can be modified to give a proof of our main theorem on
type-definable NIP fields.  Because there are two ways to construct
Artin-Schreier hypercubes, we get two proofs.  The top-down
construction is exactly the proof in Section~\ref{alg-groups-version},
while the bottom-up construction is exactly the proof in
Section~\ref{elementary-version}.

The latter requires some explanation.  There is an
Artin-Schreier hypercube $F$ underlying the proof of
Lemma~\ref{generator}.  The hypercube $F$ is constructed via the
bottom-up construction, and the $V_1,\ldots, V_n$ in the proof of
Lemma~\ref{generator} correspond to the $H_1,\ldots,H_n$ in
Example~\ref{reverse-construction}.  The various objects appearing in
the proof of Lemma~\ref{generator} are explained as follows:
\begin{itemize}
\item The map $f_T$ is the map $F_{\varnothing} \to
  F_{[n]}$ on the diagonal of the cube.
\item The maps $f_{W_i}$ are the maps $F_{\varnothing} \to F_{[n]
  \setminus \{i\}}$.
\item The maps $x \mapsto b_i^p \cdot \wp(x/b_i)$ are the maps $F_{[n]
  \setminus i} \to F_{[n]}$.
\item The maps $x \mapsto a_i^p \cdot \wp(x/a_i)$ are the maps
  $F_{\varnothing} \to F_{\{i\}}$.
\item Lemma~\ref{down-lemma} is a manifestation of the fact that $F$
  is a pullback hypercube.
\end{itemize}
\begin{remark}
  Every Artin-Schreier hypercube arises via the top-down construction,
  and every Artin-Schreier hypercube arises via the bottom-up
  construction.  On some level, the two constructions are equivalent,
  and therefore so are the two proofs of the main theorem.
\end{remark}

\begin{acknowledgment}
  The author was supported by Fudan University and the National
  Natural Science Foundation of China (Grant No.\@ 12101131).  The
  author would like to thank Yatir Halevi and Itay Kaplan, who
  suggested the problem and provided helpful discussion.
\end{acknowledgment}

\bibliographystyle{plain} \bibliography{little-bib}{}

\end{document}